\theoremstyle{plain} 
\newtheorem{thm}{Theorem}[section]
\newtheorem{lem}[thm]{Lemma}
\theoremstyle{definition}
\newtheorem{defn}[thm]{Definition}
\newtheorem{rem}[thm]{Remark}
\newtheorem{ex}[thm]{Example}
\numberwithin{equation}{section}
\renewcommand{\theta}{\vartheta}
\renewcommand{\phi}{\varphi}
\renewcommand{\epsilon}{\varepsilon}
\newcommand{\N}{\mathbb N}
\newcommand{\Z}{\mathbb Z}
\DeclareMathOperator{\id}{id}
\begin{document}
\title{Factorization of Frieze Patterns}
\author{Moritz Weber and Mang Zhao}
\address{Saarland University, Fachbereich Mathematik, Postfach 151150,
66041 Saarbr\"ucken, Germany}
\email{weber@math.uni-sb.de, s8mazhao@stud.uni-saarland.de}
\date{\today}
\subjclass[2010]{05EXX (Primary); 13FXX, 51M20 (Secondary)}
\keywords{frieze pattern, factorization, reducibility, quiddity cycle}
\thanks{The first author was supported by the ERC Advanced Grant NCDFP, held by Roland Speicher, by the SFB-TRR 195, and by the DFG project \emph{Quantenautomorphismen von Graphen}. This article was part of the second author's Bachelor's thesis. We thank Michael Cuntz for introducing us to Frieze patterns and for discussions on the article.}

\begin{abstract}
In 2017, Michael Cuntz gave a definition of reducibility of quiddity cycles of frieze patterns: It is reducible if it can be written as a sum of two other quiddity cycles. We discuss the commutativity and associativity of this sum operator for quiddity cycles and its equivalence classes, respectively.
We show that the sum is neither commutative nor associative, but we may circumvent this issue by passing to equivalence classes. We also address the question whether a decomposition of quiddity cycles into irreducible factors is unique and we answer it in the negative by giving counterexamples. We conclude that even under stronger assumptions, there is no canonical decomposition.
\end{abstract}

\maketitle
\section*{Introduction}

Frieze patterns have been first introduced by H.S.M. Coxeter in 1971 \cite{FP}. They consist in rows of numbers, where the first two rows at top and bottom are 0's and 1's and the minor of every adjacent $2 \times 2$ entries is equal to one. Moreover, if the minor of every adjacent $3 \times 3$ entries is equal to zero, then this frieze pattern is tame. For example, a tame frieze pattern with width four is as follows:
\begin{table}[h]
	\centering
	\resizebox{\linewidth}{!}{
		\begin{tabular}{cccccccccccccccccccccccc}
			row 0 & 0 &  & 0 &  & 0 &  & 0 &  & 0 &  & 0 &  & 0 &  & 0 &  & 0 &  & 0 &  & 0 &  & 0 \\ 
			row 1 &  & 1 &  & 1 &  & 1 &  & 1 &  & 1 &  & 1 &  & 1 &  & 1 &  & 1 &  & 1 &  & 1 &  \\ 
			row 2 & ... &  & \textbf{3} &  & \textbf{1} &  & \textbf{2} &  & \textbf{4} &  & \textbf{1} &  & \textbf{2} &  & \textbf{2} &  & 3 &  & 1 &  & 2 &  & ... \\ 
			row 3 &  &  &  & 2 &  & 1 &  & 7 &  & 3 &  & 1 &  & 3 &  & 5 &  & 2 &  & 1 &  & ... &  \\ 
			row 4 & ... &  & 3 &  & 1 &  & 3 &  & 5 &  & 2 &  & 1 &  & 7 &  & 3 &  & 1 &  & 3 &  & ... \\ 
			row 5 &  &  &  & 1 &  & 2 &  & 2 &  & 3 &  & 1 &  & 2 &  & 4 &  & 1 &  & 2 &  & ... &  \\ 
			row 6 & ... &  & 1 &  & 1 &  & 1 &  & 1 &  & 1 &  & 1 &  & 1 &  & 1 &  & 1 &  & 1 &  & ... \\ 
			row 7 &  & 0 &  & 0 &  & 0 &  & 0 &  & 0 &  & 0 &  & 0 &  & 0 &  & 0 &  & 0 &  & 0 &  \\ 
		\end{tabular} 
	}
\end{table}

Note that each tame frieze pattern follows a certain periodicity in its entries. Now, a quiddity cycle is a finite sequence, which consists of elements in a period from row 2 \cite{FP} see the bold face sequence above. Using quiddity cycles, it becomes possible to investigate properties of a 2-dimensional frieze pattern in terms of properties of its 1-dimensional quiddity cycles. Recent results about frieze patterns, the extension to $SL_{k+1}$-friezes over integers, the close connection to geometry and algebra, as well as the combinatorial interpretations of integer-valued frieze patterns have been summarized in \cite{CFPATCOAGAC}.

In 2017, a new approach to build tame frieze patterns by using quiddity cycles has been introduced by Michael Cuntz and Thorsten Holm \cite{FPOIAOSOTCN}. Meanwhile, the domain of elements in a frieze pattern has been widened to a subset of any commutative ring of numbers \cite{FPOIAOSOTCN,ACMFRFP}. Moreover, Cuntz gave a definition for reducing a quiddity cycle into two other quiddity cycles with respect to some "$\oplus$" operator \cite{ACMFRFP}. We ask the following natural questions:
\begin{enumerate}
	\item Is the "$\oplus$" operator commutative?
	\item Is the "$\oplus$" operator associative?
	\item Obviously, each quiddity cycle can be decomposed until all of its factors are irreducible. Is it unique? Does a general decomposition form exist?
\end{enumerate}

The answer to all these questions is no, as we will show in this article (Example 2.1 and Section 3.2). However, $(1)$ and $(2)$ have positive answers if we pass to equivalence classes (Theorem 2.3).

\section{Quiddity Cycles and the $\oplus$ Operator}

\subsection{Quiddity cycles}
The following definition has been given by Cuntz in 2017 \cite{ACMFRFP}, see also \cite{FPOIAOSOTCN}. Through out this article, $R$ denotes a subset of a commutative ring and $\lambda$ is a parameter $\lambda \in \{\pm 1\}$.
\begin{defn}\label{DefCun}\cite{ACMFRFP} A \underline{\textit{$\lambda$-quiddity cycle}} over $R$ is a finite sequence $c = (c_{1}, c_{2}, ..., c_{n}) \in R^{n}$ satisfying:
	$$ \prod_{k=1}^{n} \left( \begin{array}{cc}
	c_{k} & -1 \\
	1 & 0
	\end{array} \right) =  \left( \begin{array}{cc}
	\lambda & 0 \\
	0 & \lambda
	\end{array} \right) = \lambda \id $$
\end{defn}

\begin{rem}
	Given a $(-1)$-quiddity cycle, one can associate a tame frieze pattern to it, see \cite{FPOIAOSOTCN} [Prop. 2.4].
\end{rem}

\begin{ex}\label{ExCun}	\cite{ACMFRFP} In the case $R=\mathbb{C}$, we have the following results:
	\begin{description}
		\item[$(1)$] $(0,0)$ is the only $\lambda$-quiddity cycle of length 2.
		\item[$(2)$] $(-1,-1,-1)$ is the only $1$-quiddity cycle of length 3.
		\item[$(3)$] $(1,1,1)$ is the only $(-1)$-quiddity cycle of length 3.
		\item[$(4)$] $(t,\frac{2}{t},t,\frac{2}{t})$, $t$ a unit and $(a,0,-a,0)$, $a$ arbitrary, are the only $\lambda$-quiddity cycles of length 4.
	\end{description}
\end{ex}

\begin{ex}
	\begin{enumerate}
		\item $c = (4,0,-3,-1,0,2,1) \in R^{7}$ is a $(-1)$-quiddity cycle over $R = \Z$.
		\item $c = (3,1,2,4,1,2,2)\in R^{7}$ is a $(-1)$-quiddity cycle over $R = \Z$.
	\end{enumerate}
\end{ex}

\subsection{Equivalence classes of quiddity cycles}
\begin{defn}
	Let $c = (c_{1},...,c_{n}) \in R^{n}$ be a $\lambda$-quiddity cycle and let $\sigma \in D_{n}$, where $D_{n}$ is the dihedral group acting on $n$ points. We put 
	$$c^{\sigma} := (c_{\sigma_{1}},...,c_{\sigma_{n}}).$$
	Moreover, we define the following equivalence relation for tuples $a,b \in R^{n}$.
	$$ a \sim b :\Leftrightarrow \exists \sigma \in D_{n}: a = b^{\sigma}$$
	For convenience, we put $D_{\infty}:= \bigcup_{n \in \N} D_{n}$ and we write $c^{\sigma}$ for $\sigma \in D_{\infty}$ only if $\sigma$ has an appropriate length.
\end{defn}

\begin{rem}\label{QCDihedral}
	The sequence $c^{\sigma}$ is a $\lambda$-quiddity cycle as well \cite{FPOIAOSOTCN} [Prop 2.6]. Note that by definition $c^{\sigma}$ is a cyclic rotation of $(c_{1},c_{2},...,c_{n})$ or of $(c_{n},...,c_{2},c_{1})$.
\end{rem}

\begin{lem}\label{EquiRelation} For $n\in \N$ and $a,b \in R^{n}, a\sim b$ defines an equivalence relation on $R$.
\end{lem}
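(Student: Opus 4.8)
The plan is to verify the three defining properties of an equivalence relation — reflexivity, symmetry, transitivity — on $R^n$, and the key observation that makes all three work uniformly is that the assignment $\sigma \mapsto (c \mapsto c^{\sigma})$ is a group action of $D_n$ on $R^n$. Viewing $D_n$ as a subgroup of the symmetric group $S_n$ (generated by the $n$-cycle and a reflection), I would first record the two action axioms. For the identity permutation $\id \in D_n$ we have $c^{\id} = (c_{\id_1}, \dots, c_{\id_n}) = c$. For $\sigma, \tau \in D_n$, writing $d := c^{\sigma}$ so that $d_i = c_{\sigma_i}$, a one-line index computation gives $(c^{\sigma})^{\tau} = d^{\tau} = (d_{\tau_1}, \dots, d_{\tau_n}) = (c_{\sigma_{\tau_1}}, \dots, c_{\sigma_{\tau_n}}) = c^{\sigma \circ \tau}$, since $\sigma_{\tau_i} = (\sigma \circ \tau)_i$.

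With these two facts in hand the rest is immediate. Reflexivity: $\id \in D_n$ and $a = a^{\id}$, so $a \sim a$. Symmetry: if $a \sim b$, pick $\sigma \in D_n$ with $a = b^{\sigma}$; since $D_n$ is a group, $\sigma^{-1} \in D_n$, and $a^{\sigma^{-1}} = (b^{\sigma})^{\sigma^{-1}} = b^{\sigma \circ \sigma^{-1}} = b^{\id} = b$, hence $b \sim a$. Transitivity: if $a = b^{\sigma}$ and $b = c^{\tau}$ with $\sigma, \tau \in D_n$, then $a = (c^{\tau})^{\sigma} = c^{\tau \circ \sigma}$, and $\tau \circ \sigma \in D_n$ because $D_n$ is closed under composition, so $a \sim c$.

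I do not expect any genuine obstacle here; the proof is a routine unwinding of the definition. The only points requiring a little care are reading $D_n$ as a permutation group, so that inverses and composition make sense, and bookkeeping the composition order in the action identity $(c^{\sigma})^{\tau} = c^{\sigma \circ \tau}$. (Alternatively, one can simply cite the standard fact that the orbits of a group action partition the underlying set, but spelling out the three axioms directly is shorter than setting that machinery up.)
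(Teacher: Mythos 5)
Your proof is correct and follows essentially the same route as the paper's: both reduce the three axioms to the facts that $c^{\id}=c$, that $D_n$ is closed under inverses and composition, and that composing permutations composes the actions. You simply spell out the action identity $(c^{\sigma})^{\tau}=c^{\sigma\circ\tau}$ more explicitly than the paper, which states the conclusions $a=a^{\id}$, $b=a^{\sigma^{-1}}$, $a=c^{\sigma\pi}$ in one line.
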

\begin{proof}
	Given $a=b^{\sigma}$ and $b=c^{\pi}$, we have $a = a^{\id}$, $b = a^{\sigma^{-1}}$ and $a = c^{\sigma \pi}$ which proves reflexivity, symmetry and transitivity.
\end{proof}

\subsection{The $\oplus$ operator}
\begin{defn}\cite{ACMFRFP}
	Let $a = (a_{1}, ..., a_{k})$ be a $\lambda$-quiddity cycle and $b = (b_{1}, ..., b_{l})$ be a $\lambda^{\prime}$-quiddity cycle. We put
	 \[a \oplus b = (a_{1} + b_{l}, a_{2},...,a_{k-1},a_{k}+b_{1},b_{2},...,b_{l-1}).\]
\end{defn}

\begin{lem}\label{RelationSumQC} Let  $a = (a_{1}, a_{2}, ..., a_{k})$ be a $\lambda^{ \prime}$-quiddity cycle and $b = (b_{1}, b_{2}, ..., b_{l}) \in R^{l}$ be any finite sequence. Then $b$ is a $\lambda^{\prime\prime}$-quiddity cycle if and only if $ a \oplus b  $ is a $(- \lambda^{ \prime}\lambda^{\prime\prime})$-quiddity cycle.
\end{lem}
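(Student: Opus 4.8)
The plan is to express the matrix product of Definition~\ref{DefCun} attached to $a\oplus b$ in terms of the ones attached to $a$ and $b$, so that the asserted equivalence drops out in one line. For a finite sequence $c=(c_{1},\dots,c_{n})$ write
\[
M(x):=\begin{pmatrix} x & -1\\ 1 & 0\end{pmatrix},\qquad \eta(c):=\prod_{i=1}^{n}M(c_{i}),\qquad S:=M(0)=\begin{pmatrix} 0 & -1\\ 1 & 0\end{pmatrix},
\]
so that a sequence $c$ is a $\mu$-quiddity cycle precisely when $\eta(c)=\mu\id$, and note $S^{2}=-\id$. The only genuine hand computation I would do is to verify the elementary identity
\[
M(x+y)=-\,M(x)\,S\,M(y)=-\,M(y)\,S\,M(x),
\]
valid for all $x,y$ in the ambient commutative ring (the second equality is the first with $x$ and $y$ interchanged); this is the engine of the whole argument.

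First I would feed this identity into the two entries of $a\oplus b=(a_{1}+b_{l},\,a_{2},\dots,a_{k-1},\,a_{k}+b_{1},\,b_{2},\dots,b_{l-1})$ at which an $a$-part meets a $b$-part, writing $M(a_{1}+b_{l})=-M(b_{l})\,S\,M(a_{1})$ for the first entry and $M(a_{k}+b_{1})=-M(a_{k})\,S\,M(b_{1})$ for the middle entry. The two minus signs cancel and the block $M(a_{1})M(a_{2})\cdots M(a_{k})$ reassembles into $\eta(a)$, which gives
\[
\eta(a\oplus b)=M(b_{l})\,S\,\eta(a)\,S\,M(b_{1})M(b_{2})\cdots M(b_{l-1}).
\]
Since $a$ is a $\lambda^{\prime}$-quiddity cycle, $\eta(a)=\lambda^{\prime}\id$ is a scalar matrix, hence central, so $S\,\eta(a)\,S=\lambda^{\prime}S^{2}=-\lambda^{\prime}\id$ and therefore
\[
\eta(a\oplus b)=-\lambda^{\prime}\,M(b_{l})\,M(b_{1})M(b_{2})\cdots M(b_{l-1})=-\lambda^{\prime}\,M(b_{l})\,\eta(b)\,M(b_{l})^{-1},
\]
where the last equality uses $M(b_{1})\cdots M(b_{l-1})=\eta(b)\,M(b_{l})^{-1}$ and that $M(b_{l})$ is invertible over the ring because $\det M(b_{l})=1$.

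Finally I would read the equivalence off this formula. Conjugation by the invertible matrix $M(b_{l})$ is a bijection fixing every scalar matrix, and $(\lambda^{\prime})^{2}=1$; hence $\eta(b)=\lambda^{\prime\prime}\id$ holds if and only if $\eta(a\oplus b)=-\lambda^{\prime}\lambda^{\prime\prime}\id$, which is exactly the statement. In the boundary case $l=2$ the block $M(b_{2})\cdots M(b_{l-1})$ is an empty product equal to $\id$ and every step is unchanged; I would tacitly assume $l\ge2$, as the definition of $\oplus$ implicitly requires. I do not anticipate a real obstacle here: the one delicate point is the sign bookkeeping, in particular the observation that it is the \emph{scalar} value of $\eta(a)$ that lets the two copies of $S$ collapse to $S^{2}=-\id$, which is precisely where the sign $-\lambda^{\prime}\lambda^{\prime\prime}$ in the statement originates.
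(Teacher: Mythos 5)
Your proof is correct and follows essentially the same route as the paper: the identity $M(x+y)=-M(y)\,S\,M(x)$ with $S=M(0)$ is exactly the engine of the paper's computation, which likewise inserts $S$ at the two junction entries $a_{1}+b_{l}$ and $a_{k}+b_{1}$ and collapses $S\,\eta(a)\,S$ to $-\lambda^{\prime}\id$. The only (harmless) difference is that you package the result as a single conjugation formula $\eta(a\oplus b)=-\lambda^{\prime}M(b_{l})\eta(b)M(b_{l})^{-1}$ yielding both directions at once, whereas the paper cites Cuntz's Lemma 2.7 for the ``if'' direction and finishes the converse by recognizing $(b_{l},b_{1},\dots,b_{l-1})$ as a cyclic rotation of $b$ and invoking dihedral invariance.
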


\begin{proof} \allowdisplaybreaks
If $a$ and $b$ are quiddity cycles, so is $a \oplus b$ by \cite{ACMFRFP} [Lemma 2.7]. Conversely, let $a \oplus b$ be a $(- \lambda^{ \prime}\lambda^{\prime\prime})$-quiddity cycle. Then
\begin{align*}
	&\quad (- \lambda^{ \prime}\lambda^{\prime\prime}) \id\\
	&=\left(\begin{array}{cc}
	a_{1} + b_{l} & -1\\
	1 & 0
	\end{array} \right)   \prod_{t=2}^{k-1}\left(\begin{array}{cc}
	a_{t} & -1\\
	1 & 0
	\end{array} \right) \left( \begin{array}{cc}
	a_{k} + b_{1} & -1\\
	1 & 0
	\end{array}\right) \prod_{t=2}^{l-1}\left(\begin{array}{cc}
	b_{t} & -1\\
	1 & 0
	\end{array} \right) \\
	&= \left(\begin{array}{cc}
	b_{l} & -1\\
	1 & 0
	\end{array} \right)\left(\begin{array}{cc}
	0 & -1\\
	1 & 0
	\end{array} \right)\left(\begin{array}{cc}
	a_{1} & -1\\
	1 & 0
	\end{array} \right)\prod_{t=2}^{k-1}\left(\begin{array}{cc}
	a_{t} & -1\\
	1 & 0
	\end{array} \right) \left( \begin{array}{cc}
	a_{k} & -1\\
	1 & 0
	\end{array}\right)\\
	&\quad \left( \begin{array}{cc}
	0 & -1\\
	1 & 0
	\end{array}\right)\left( \begin{array}{cc}
	b_{1} & -1\\
	1 & 0
	\end{array}\right) \prod_{t=2}^{l-1}\left(\begin{array}{cc}
	b_{t} & -1\\
	1 & 0
	\end{array} \right)\\
	&= (- \lambda^{ \prime})\left(\begin{array}{cc}
	b_{l} & -1\\
	1 & 0
	\end{array} \right) \prod_{t=1}^{l-1}\left(\begin{array}{cc}
	b_{t} & -1\\
	1 & 0
	\end{array} \right)
\end{align*} 
	Therefore, $(b_{l},b_{1},...,b_{l-1})$ is a $\lambda^{\prime\prime}$-quiddity cycle. With Remark \ref{QCDihedral} we know that $b$ is also a $\lambda^{\prime\prime}$-quiddity cycle. 
\end{proof}

\section{Commutativity and Associativity for $\oplus$}

We now show that the sum of quiddity cycles is neither commutative nor associative in general. However, passing to equivalence classes solves this issue.

\begin{ex}\label{RemNonCommAsso} 
	\begin{enumerate}
		\item We do not have $a \oplus b = b \oplus a$ in general. For example, if $a = (1,1,1)$ and $b = (2,1,2,1)$, then $$a \oplus b = (2, 1, 3, 1, 2) \neq (3, 1, 2, 2, 1) = b \oplus a.$$
		Note that we have $a \oplus b \sim b \oplus a$.
		
		\item We do not have $(a \oplus b) \oplus c = a \oplus (b \oplus c)$ in general. For example, if $a = (1,1,1)$, $b = (2,1,2,1)$ and $c = (1,1,1)$, then 
		\[\begin{aligned}
			&(a \oplus b) \oplus c&&= (2, 1, 3, 1, 2) \oplus (1,1,1) \\
			&\quad &&= (3,1,3,1,3,1) \\
			&\quad &&\neq (2,1,4,1,2,2)\\
			&\quad &&= (1,1,1) \oplus (3, 1, 2, 2, 1)\\
			&\quad &&= a \oplus (b \oplus c)
		\end{aligned}\] 
		
		Note that we even have $(a \oplus b) \oplus c \nsim a \oplus (b \oplus c)$.
		
		\item If $a \oplus b \sim a \oplus c$, then we may have $b \neq c$ and even $b \nsim c$. For example, \[(1,1,1) \oplus (0,6,0,-6) = (-5,1,1,6,0)\]
		\[	(1,1,1) \oplus (5,0,-5,0) = (1,1,6,0,-5)\]
	\end{enumerate}
\end{ex}

Hence, the sum does not behave nicely on the level of quiddity cycles. However, on equivalence classes the situation is better. Before proving the main theorem of this article, we need to prove some technical lemma.

\begin{lem}\label{abExchange}
	Let $a = (a_{1},...,a_{k})$ be a $\lambda$-quiddity cycle and let $b = (b_{1},...,b_{l})$ be a $\lambda^{\prime}$-quiddity cycle. Let $\widetilde{a} = (a_{k},...,a_{1})$ and $\widetilde{b} = (b_{l},...,b_{1})$. Then $a \oplus b \sim \widetilde{a} \oplus \widetilde{b}$.
\end{lem}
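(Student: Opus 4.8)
The plan is to prove the statement by a completely explicit computation: expand both $a \oplus b$ and $\widetilde{a} \oplus \widetilde{b}$ using the definition of $\oplus$, and then write down a dihedral permutation taking one sequence to the other. By definition,
\[
a \oplus b = (a_{1} + b_{l},\, a_{2}, \dots, a_{k-1},\, a_{k} + b_{1},\, b_{2}, \dots, b_{l-1}),
\]
a sequence of length $k + l - 2$. Setting $\widetilde{a}_{i} = a_{k+1-i}$ and $\widetilde{b}_{j} = b_{l+1-j}$ and substituting into the same formula, the two "glued" entries become $\widetilde{a}_{1} + \widetilde{b}_{l} = a_{k} + b_{1}$ and $\widetilde{a}_{k} + \widetilde{b}_{1} = a_{1} + b_{l}$, so that
\[
\widetilde{a} \oplus \widetilde{b} = (a_{k} + b_{1},\, a_{k-1}, \dots, a_{2},\, a_{1} + b_{l},\, b_{l-1}, \dots, b_{2}).
\]

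Next I would reverse the sequence $a \oplus b$, obtaining $(b_{l-1}, \dots, b_{2},\, a_{k} + b_{1},\, a_{k-1}, \dots, a_{2},\, a_{1} + b_{l})$, and then observe that a cyclic rotation by $l-2$ positions moves the entry $a_{k} + b_{1}$ (which sits in position $l-1$ of this reversed sequence) to the front, yielding precisely the sequence $\widetilde{a} \oplus \widetilde{b}$ displayed above. Since a reversal followed by a cyclic rotation is an element of $D_{k+l-2}$, this exhibits a $\sigma \in D_{k+l-2}$ with $(a\oplus b)^{\sigma} = \widetilde{a} \oplus \widetilde{b}$, and hence $a \oplus b \sim \widetilde{a} \oplus \widetilde{b}$ by definition of $\sim$ (cf.\ Remark \ref{QCDihedral}). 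One could equivalently phrase the whole argument as: $\widetilde{a} \oplus \widetilde{b}$ is the image of $a \oplus b$ under a suitable reflection in $D_{k+l-2}$.

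I do not anticipate a genuine obstacle; the lemma is purely a bookkeeping statement about how $\oplus$ interacts with reversal, and the quiddity-cycle hypotheses are used only to guarantee that the sequences under consideration are the relevant ones (the combinatorial identity itself needs no such hypothesis). The one point requiring a little care is tracking indices — in particular verifying that $a_{1}+b_{l}$ and $a_{k}+b_{1}$ land in the asserted positions after reversing and rotating — together with the degenerate cases $k = 2$ or $l = 2$, where some of the listed blocks (e.g.\ $a_{2}, \dots, a_{k-1}$) are empty; in those cases the displayed formulas remain valid verbatim with the empty blocks simply omitted, so the argument goes through unchanged.
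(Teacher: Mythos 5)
Your proposal is correct and follows essentially the same route as the paper: write out $a \oplus b$ and $\widetilde{a} \oplus \widetilde{b}$ from the definition and observe that the latter is the image of the former under a reflection (reversal followed by a rotation by $l-2$) in $D_{k+l-2}$. The paper's proof simply displays the two sequences and leaves the dihedral element implicit, whereas you identify it explicitly and check the degenerate cases; both the index bookkeeping and the rotation amount are correct.
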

\begin{proof}
	We have $$ a \oplus b = (a_{1} + b_{l}, a_{2},...,a_{k-1},a_{k}+b_{1},b_{2},...,b_{l-1}) $$
	and \[\widetilde{a} \oplus \widetilde{b} = (a_{k}+b_{1},a_{k-1},...,a_{2}, a_{1} + b_{l}, b_{l-1},...,b_{2}).\]
\end{proof}

\begin{thm}\label{CommutativityAssociativity} Let $k,l,m>2$ and let 
\begin{itemize}
	\item $a = (a_{1},a_{2}, ..., a_{k}) \in R^{k}$ be a $\lambda$-quiddity cycle
	\item $b = (b_{1}, b_{2}, ..., b_{l}) \in R^{l}$ be a $\lambda^{\prime}$-quiddity cycle
	\item $c = (c_{1}, c_{2}, ..., c_{m}) \in R^{m}$ be a $\lambda^{\prime\prime}$-quiddity cycle.
\end{itemize}
Then we have:
\begin{enumerate}
	\item $a \oplus b \sim b \oplus a$.
	
	\item $\exists \sigma,\tau \in D_{\infty}: (a \oplus b) \oplus c = a \oplus (b^{\tau} \oplus c)^{\sigma}$.
	
	\item Let $\sigma \in D_{\infty}$. Then either $$a\oplus (b \oplus c)^{\sigma} \sim (a \oplus c^{\tau_{1}})^{\tau_{2}} \oplus b$$ or $$a \oplus (b \oplus c)^{\sigma} \sim (a \oplus b^{\tau_{1}})^{\tau_{2}} \oplus c$$ for some $\tau_{1},\tau_{2} \in D_{\infty}$.
	
	\item If $k = l$, $\lambda = \lambda^{\prime}$ and $a \oplus b = b \oplus a$, then $a=b$.
\end{enumerate}
\end{thm}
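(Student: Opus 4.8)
The plan is to prove the four parts in order, using the characterization of the $\oplus$ operator via the matrix products in Definition \ref{DefCun} together with the dihedral-symmetry results of Remark \ref{QCDihedral} and Lemma \ref{abExchange}. For part $(1)$, I would write out both $a \oplus b$ and $b \oplus a$ explicitly from the definition: $a\oplus b = (a_1+b_l, a_2,\dots,a_{k-1}, a_k+b_1, b_2,\dots,b_{l-1})$ and $b\oplus a = (b_1+a_k, b_2,\dots,b_{l-1}, b_l+a_1, a_2,\dots,a_{k-1})$. Reading off the entries, $b\oplus a$ is exactly the cyclic rotation of $a\oplus b$ that moves the block $b_2,\dots,b_{l-1}$ to the front (or equivalently, a rotation by $k-1$ positions); hence $a\oplus b \sim b\oplus a$. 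The hypotheses $k,l>2$ guarantee that the entries $a_1+b_l$ and $a_k+b_1$ are genuinely distinct positions, so there is no degenerate overlap.

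For part $(2)$, the key point is that $(a\oplus b)\oplus c$ is again a $\lambda''$-times-sign quiddity cycle, and I want to re-bracket it as a sum with $a$ on the left. I would expand $(a\oplus b)\oplus c$ fully: since $a\oplus b$ has length $k+l-2$ with first entry $a_1+b_l$ and last entry $b_{l-1}$, forming $(a\oplus b)\oplus c$ adds $c_m$ to the first entry and $c_1$ to the last, giving $(a_1+b_l+c_m, a_2,\dots,a_{k-1}, a_k+b_1, b_2,\dots,b_{l-1}+c_1, c_2,\dots,c_{m-1})$. On the other hand, $a\oplus(\,b^\tau\oplus c\,)^\sigma$: choosing $\tau$ to be the rotation of $b$ placing $b_l$ first, i.e. $b^\tau=(b_l,b_1,\dots,b_{l-1})$, and then $b^\tau \oplus c = (b_l+c_m, b_1,\dots, b_{l-1}+c_1, c_2,\dots,c_{m-1})$, and finally applying a dihedral $\sigma$ to bring this into the shape whose $\oplus$ with $a$ reproduces the displayed tuple — this is a bookkeeping matter of matching endpoints, and by Remark \ref{QCDihedral} the intermediate object $(b^\tau\oplus c)^\sigma$ is still a quiddity cycle so $a\oplus(\cdot)$ is legitimate. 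The main obstacle here is purely notational: one must track which entry of $b$ ends up adjacent to which entry of $a$ and of $c$, and choose $\sigma,\tau$ accordingly; it is an equality of tuples, so no $\sim$ slack is allowed, which is what forces the explicit choices.

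For part $(3)$, I would argue as follows. By part $(1)$, $a\oplus(b\oplus c)^\sigma \sim (b\oplus c)^\sigma \oplus a$. Now $(b\oplus c)^\sigma$ is, by Remark \ref{QCDihedral}, a cyclic rotation either of $b\oplus c = (b_1+c_l, b_2,\dots, b_{l-1}, b_l+c_1, c_2,\dots,c_{m-1})$ or of its reversal. Split into two cases according to which of the two "glue" entries $b_1+c_l$ or $b_l+c_1$ sits in the first or last slot after the rotation that realizes $\oplus a$ — equivalently, according to whether the cut introduced by $\oplus a$ lands inside the $b$-block or inside the $c$-block of $b\oplus c$. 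In the first case the outer sum "opens up" the $b$-block and, after using Lemma \ref{abExchange} to align orientations and part $(1)$ again, one recognizes the result as a sum of the form $(a\oplus b^{\tau_1})^{\tau_2}\oplus c$; symmetrically, cutting in the $c$-block yields $(a\oplus c^{\tau_1})^{\tau_2}\oplus b$. The hard part is being sure the case distinction is exhaustive: this needs the observation that $b\oplus c$ consists of exactly two blocks (the $b$-part and the $c$-part) joined at the two entries $b_1+c_l$ and $b_l+c_1$, so any dihedral image and any subsequent outer cut must land within one block or the other — this is where $l,m>2$ is used, to ensure both blocks are nonempty and the two glue entries are distinct.

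For part $(4)$, assume $k=l$, $\lambda=\lambda'$, and $a\oplus b = b\oplus a$ as tuples. Writing both sides out as in part $(1)$: the left side is $(a_1+b_k, a_2,\dots,a_{k-1}, a_k+b_1, b_2,\dots,b_{k-1})$ and the right side is $(b_1+a_k, b_2,\dots,b_{k-1}, b_k+a_1, a_2,\dots,a_{k-1})$. Comparing entry by entry (both have length $2k-2$): position $1$ gives $a_1+b_k = b_1+a_k$, position $k$ gives $a_k+b_1 = b_k+a_1$ (consistent with the first), positions $2,\dots,k-1$ give $a_i = b_i$ for $2\le i\le k-1$, and positions $k+1,\dots,2k-2$ give $b_i = a_i$ for $2\le i\le k-1$ (same information). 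So it remains to pin down $a_1, b_1, a_k, b_k$, knowing $a_i=b_i$ for the middle entries and $a_1+b_k=a_k+b_1$. I would finish by invoking the quiddity-cycle equations: since $a$ and $b$ are both $\lambda$-quiddity cycles of the same length with the same middle entries, the matrix product $\prod_{t=2}^{k-1}\left(\begin{smallmatrix} a_t & -1\\ 1 & 0\end{smallmatrix}\right)$ is a fixed matrix $M=\left(\begin{smallmatrix} p & q\\ r & s\end{smallmatrix}\right)$, and the equation $\left(\begin{smallmatrix} a_1 & -1\\ 1 & 0\end{smallmatrix}\right) M \left(\begin{smallmatrix} a_k & -1\\ 1 & 0\end{smallmatrix}\right) = \lambda\id$ becomes a system in $a_1,a_k$ (and similarly $b_1,b_k$) whose relevant entries force, together with $a_1+b_k=a_k+b_1$, that $a_1=b_1$ and $a_k=b_k$; concretely the off-diagonal/corner entries of that product give $a_1$ and $a_k$ as explicit rational expressions in $p,q,r,s$ — hence the same expressions for $b_1,b_k$ — so $a=b$. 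The main obstacle in this part is handling the case where the linear system is degenerate (e.g. $r=0$), where one falls back on the extra relation $a_1+b_k=a_k+b_1$ and the remaining matrix equations to conclude.
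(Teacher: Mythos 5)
Your proposal is correct and follows essentially the same route as the paper in all four parts: the explicit cyclic rotation for $(1)$, the same choice of $\tau$ (putting $b_l$ first) followed by a one-step rotation $\sigma$ for $(2)$, the same case split on whether the dihedral image cuts the $b$-block or the $c$-block of $b\oplus c$ (with Lemma \ref{abExchange} handling the reflection cases) for $(3)$, and the same fixed-middle-matrix argument for $(4)$. The only point worth adding is that the degenerate case you flag in $(4)$ never occurs: the $(2,2)$-entry of the product equation forces $p=m_{1,1}=-\lambda\in\{\pm 1\}$, a unit, so the corner entries always determine $a_1$ and $a_k$ outright and the relation $a_1+b_k=a_k+b_1$ is not needed.
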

\begin{proof}
	\begin{enumerate}
		\item Let $\sigma \in D_{\infty}$ be given by $(1,...,k+l-2) \mapsto (l,...,k+l-2,1,...,l-1)$. Then:
		\[\begin{aligned}
		&a \oplus b &&= (a_{1} + b_{l}, a_{2}, ..., a_{k-1}, a_{k} + b_{1}, b_{2}, ..., b_{l-1})\\
		&\quad &&= (a_{k} + b_{1}, b_{2}, ..., b_{l-1}, a_{1} + b_{l}, a_{2}, ..., a_{k-1})^{\sigma}\\
		&\quad &&= (b \oplus a)^{\sigma}
		\end{aligned}\]

		\item Let  $\sigma \in D_{\infty}$ be given by $(1,...,l+m-2) \mapsto (2,...,l+m-2,1)$ and $\tau \in D_{\infty}$ be given by $(1,...,l) \mapsto (l,1,...,l-1)$. Then: 		
		 \[\begin{aligned}
		&(a \oplus b) \oplus c &&= (a_{1} + b_{l}, a_{2}, ..., a_{k-1}, a_{k} + b_{1}, b_{2}, ..., b_{l-1}) \oplus c \\
		&\quad &&= (a_{1} + b_{l} + c_{m}, a_{2}, ..., a_{k-1}, a_{k} + b_{1}, b_{2}, ..., b_{l-2}, b_{l-1}+c_{1}, c_{2}, ..., c_{m-1})\\
		&\quad &&= a \oplus (b_{1}, b_{2}, ..., b_{l-2}, b_{l-1}+c_{1}, c_{2}, ..., c_{m-1},b_{l} + c_{m}) \\
		&\quad &&= a \oplus (b_{l} + c_{m}, b_{1}, b_{2}, ..., b_{l-2}, b_{l-1}+c_{1}, c_{2}, ..., c_{m-1})^{\sigma}\\ 
		&\quad &&= a \oplus ((b_{l}, b_{1}, b_{2}, ..., b_{l-1}) \oplus c)^{\sigma} \\
		&\quad &&= a \oplus (b^{\tau} \oplus c)^{\sigma}
		\end{aligned}\] 
		
		\item We split the proof of $(3)$ into four cases depending on $\sigma$. Note that 
		$$b \oplus c = (b_{1}+c_{m},b_{2},...,b_{l-1},b_{l}+c_{1},c_{2},...,c_{m-1}).$$
		\begin{description}
			\item[(3.1)] If $(b \oplus c)^{\sigma} = b \oplus c$, then using $(1)$ and $(2)$, we find $\tau_{1},\tau_{2},\pi \in D_{\infty}$ such that: $$a \oplus (b \oplus c)^{\sigma} \sim (b \oplus c) \oplus a = b \oplus (c^{\tau_{1}} \oplus a)^{\pi} \sim (c^{\tau_{1}} \oplus a)^{\pi} \oplus b = (a \oplus c^{\tau_{1}})^{\tau_{2}} \oplus b $$
			
			\item[(3.2)] If $(b \oplus c)^{\sigma} = (b_{i}, b_{i+1} ..., b_{l-1}, b_{l} + c_{1}, c_{2}, ..., c_{m-1}, b_{1} + c_{m}, b_{2},...,b_{i-1})$ for some $2 \leq i \leq l-1$, using $(1)$ we find $\tau_{1}, \tau_{2} \in D_{\infty}$ such that:
			\begin{align*}		
			&\quad a \oplus (b \oplus c)^{\sigma}\\
			&=(a_{1} + b_{i-1}, a_{2},...,a_{k-1}, a_{k} + b_{i}, b_{i+1} ..., b_{l-1}, b_{l} + c_{1}, c_{2}, ..., c_{m-1}, b_{1} + c_{m}, b_{2},...,\\
			&\quad b_{i-2})\\
			&\sim(b_{l} + c_{1}, c_{2}, ..., c_{m-1}, b_{1} + c_{m}, b_{2},...,b_{i-2},a_{1} + b_{i-1}, a_{2},...,a_{k-1}, a_{k} + b_{i}, b_{i+1} ...,\\
			&\quad b_{l-1})\\
			&=c \oplus (b_{1}, b_{2},...,b_{i-2},a_{1} + b_{i-1}, a_{2},...,a_{k-1}, a_{k} + b_{i}, b_{i+1} ..., b_{l-1}, b_{l})\\	
			&=c \oplus (a \oplus b^{\tau_{1}})^{\tau_{2}}\\	
			&\sim(a \oplus b^{\tau_{1}})^{\tau_{2}} \oplus c
			\end{align*}
			
			\item[(3.3)] If $\sigma$ is of the form $(1,...,l+m-2) \mapsto (i,i+1,...,l+m-2,1,...,i-1)$ for some $l \leq i \leq l+m-2$, we find some $\sigma^{\prime} \in D_{\infty}$ such that $a \oplus (b \oplus c)^{\sigma} = a \oplus (c \oplus b)^{\sigma^{\prime}}$ and apply the results from cases (3.1) and (3.2).
			
			\item[(3.4)] If $\sigma$ is of the form $(1,...,l+m+2) \mapsto (i,i-1,...,1,l+m-2,...,i+1)$ for some $1 \leq i \leq l+m-2$, we consider $\widetilde{b} = (b_{l},...,b_{2},b_{1})$ and $\widetilde{c} = (c_{m},...,c_{2},c_{1})$. By using Lemma \ref{abExchange}, we find $\widetilde{\sigma} \in D_{\infty}$ such that $(b \oplus c)^{\sigma} = (\widetilde{c} \oplus \widetilde{b})^{\widetilde{\sigma}}$ and we may use the results from the cases $(3.1),(3.2)$ and $(3.3)$ to conclude the proof with the help of Lemma \ref{abExchange}.
		\end{description}
		
		\item As $k = l$, we have
		$$ a \oplus b = (a_{1} + b_{k}, a_{2}, ..., a_{k-1}, a_{k} + b_{1}, b_{2}, ..., b_{k-1})$$ and $$b \oplus a = (b_{1} + a_{k}, b_{2}, ..., b_{k-1}, b_{k} + a_{1}, a_{2}, ..., a_{k-1}).$$ If $a \oplus b = b \oplus a$, we thus obtain $a_{t} = b_{t}$ for $2 \leq t \leq k-1$. Put 
		$$ \left( \begin{array}{cc}
		m_{1,1} & m_{1,2} \\
		m_{2,1} & m_{2,2}
		\end{array} \right) := \prod_{t=2}^{k-1} \left( \begin{array}{cc}
		a_{t} & -1 \\
		1 & 0
		\end{array} \right)= \prod_{t=2}^{k-1} \left( \begin{array}{cc}
		b_{t} & -1 \\
		1 & 0
		\end{array} \right).$$
		We then have
		\[
		\begin{aligned}
		\lambda \id &= \prod_{t=1}^{k} \left( \begin{array}{cc}
		a_{t} & -1 \\
		1 & 0
		\end{array} \right) = \left( \begin{array}{cc}
		a_{1} & -1 \\
		1 & 0
		\end{array} \right)\left( \begin{array}{cc}
		m_{1,1} & m_{1,2} \\
		m_{2,1} & m_{2,2}
		\end{array} \right)\left( \begin{array}{cc}
		a_{k} & -1 \\
		1 & 0
		\end{array} \right)\\
		&= \left( \begin{array}{cc}
		m_{1,1}a_{1}a_{k} - m_{2,1}a_{k} + m_{1,2}a_{1} - m_{2,2} & m_{2,1}-m_{1,1}a_{1} \\
		m_{1,1}a_{k} + m_{1,2} & -m_{1,1}
		\end{array} \right)
		\end{aligned} \]
		
		and as $\lambda = \lambda^{\prime}$ also
		\[
		\begin{aligned}
		\lambda\id &= \prod_{t=1}^{k} \left( \begin{array}{cc}
		b_{t} & -1 \\
		1 & 0
		\end{array} \right) = \left( \begin{array}{cc}
		b_{1} & -1 \\
		1 & 0
		\end{array} \right)\left( \begin{array}{cc}
		m_{1,1} & m_{1,2} \\
		m_{2,1} & m_{2,2}
		\end{array} \right)\left( \begin{array}{cc}
		b_{k} & -1 \\
		1 & 0
		\end{array} \right)\\
		&= \left( \begin{array}{cc}
		m_{1,1}b_{1}b_{k} - m_{2,1}b_{k} + m_{1,2}b_{1} - m_{2,2} & m_{2,1}-m_{1,1}b_{1} \\
		m_{1,1}b_{k} + m_{1,2} & -m_{1,1}
		\end{array} \right)
		\end{aligned}
		 \]
		As $m_{1,1} = -\lambda \neq 0$, this implies $a_{1} = b_{1}$ and $a_{k} = b_{k}$ and therefore $a=b$.
	\end{enumerate}
\end{proof}

\section{Factorization and reducibility}

In \cite{ACMFRFP} Cuntz introduced the concept of irreducibility for quiddity cycles. In this section, we address the question whether the decomposition into irreducible factors is unique. We give counterexamples and remark that there cannot be a canonical decomposition.

\subsection{Reducibility}
\begin{defn}\label{Reducibility}
\cite{ACMFRFP} Let $m > 2$ and let $c = (c_{1}, c_{2}, ..., c_{m}) \in R^{m}$ be a $\lambda$-quiddity cycle. Then, $c$ is called \underline{\textit{reducible}} over $R$ if there exists a $\lambda^{\prime}$-quiddity cycle $a = (a_{1}, a_{2}, ..., a_{k})$ and a $\lambda^{\prime\prime}$-quiddity cycle $b = (b_{1}, b_{2}, ..., b_{l})$, such that:
	\begin{enumerate}
		\item $\lambda = - \lambda^{\prime}\lambda^{\prime\prime}$
		\item $k,l>2$ and $m=k+l-2$
		\item $c \sim a \oplus b$
	\end{enumerate}
If $c$ is not reducible then $c$ is called \underline{\textit{irreducible}}.
\end{defn}

\begin{ex}\label{ExReducibility}\cite{ACMFRFP}
	\begin{enumerate}
		\item The set of irreducible $\lambda$-quiddity cycles over  $\mathbb{Z_{\geq \text{0}}}$ is $\{(0,0,0,0),(1,1,1)\}$.
		\item The set of irreducible $\lambda$-quiddity cycles over  $\mathbb{Z}$ is \[\{(1,1,1),(-1,-1,-1),(a,0,-a,0),(0,a,0,-a) | a\in \mathbb{Z} \backslash \{0,\pm 1\} \}.\]
		\item $(0,-1,0,1)$ is reducible over $\mathbb{Z}$, since $(0,-1,0,1) = (-1,-1,-1) \oplus (1,1,1)$.
	\end{enumerate}
\end{ex}

\begin{ex}\label{ReducibilityRing}
	\begin{enumerate}
		\item All $\lambda$-quiddity cycles having the form $(1,...,1)$ are irreducible over $R=\N_{>0}$.
		\begin{proof}
			Suppose $c=(c_{1},...,c_{m})=(1,...,1)\in R^{m}$ is reducible as $c = (a \oplus b)^{\sigma}$ with $a\in R^{k}, b\in R^{l}, \sigma \in D_{\infty}$. Then we obtain $m = k+l-2$ and $m = \sum_{t=1}^{m}c_{t} = \sum_{t=1}^{k}a_{t}+\sum_{t=1}^{l}b_{t} \ge k+l = m+2$, which is a contradiction.
		\end{proof}
	
		\item The reducibility of a $\lambda$-quiddity cycle is dependent on the set $R$. For example, $(1,1,1,1,1,1,1,1,1)$ is irreducible over $R=\N_{>0}$ by $(1)$. \\
		But $(1,1,1,1,1,1,1,1,1) = (1,1,1) \oplus (0,1,1,1,1,1,1,0)$ is reducible over $R=\mathbb{N}$.
	\end{enumerate}
\end{ex}

\subsection{The decomposition into irreducible factors is not unique}
By Section 2 we know that $\oplus$ is not associative. Hence, a decomposition into irreducible factors seems to be problematic. Indeed consider the $(-1)$-quiddity cycle $c = (4,0,-3,-1,0,2,1) \in \Z^{7}$ with $R = \Z$. From Example \ref{ExReducibility} we know all irreducible $(-1)$-quiddity cycles over $\Z$. We may decompose $c$ as follows:

\begin{figure}[h]
	\centering
	\includegraphics[width=0.7\linewidth]{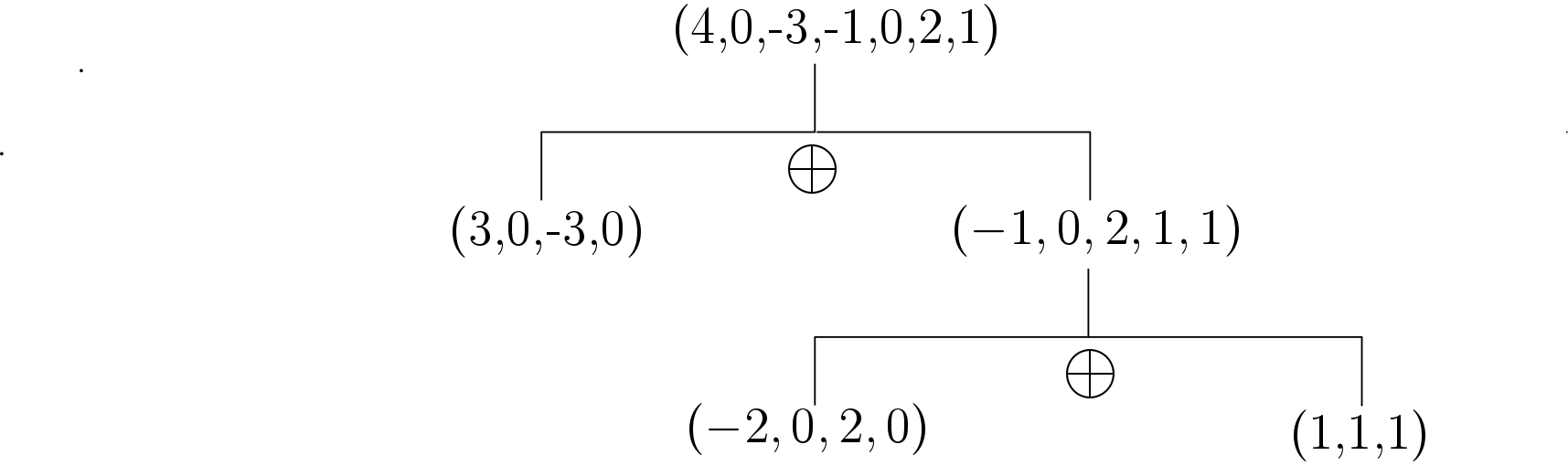}
	\label{fig:red}
\end{figure}

\pagebreak
However, we may also choose the following decomposition:

\begin{figure}[h]
	\centering
	\includegraphics[width=1.0\linewidth]{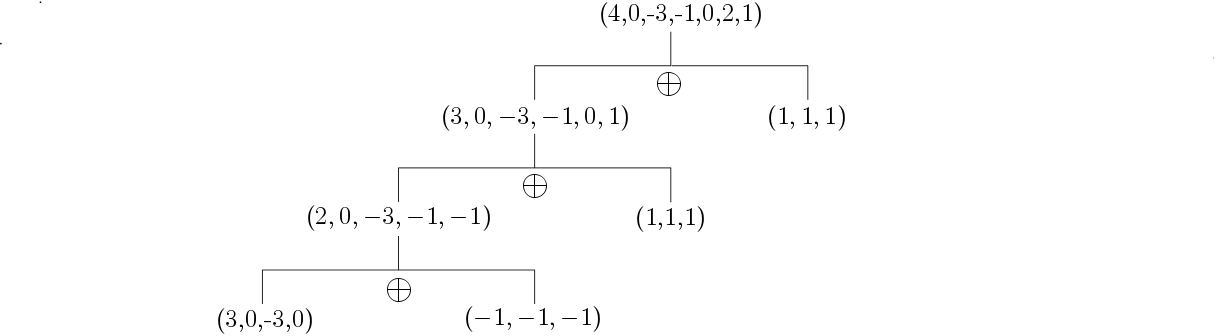}
	\label{fig:red1}
\end{figure}

We observe that different irreducible factors may appear and even the number of irreducible factors may vary.

\subsection{There is no canonical decomposition into irreducible factors}
In Section 3.2, we saw an example of a $(-1)$-quiddity cycle c over $\Z$ with decompositions $c = a_{1} \oplus (a_{2} \oplus a_{3})$ and $c = ((b_{1} \oplus b_{2}) \oplus b_{3}) \oplus b_{4}$ into irreducible factors.\\
So, as the decomposition into irreducible factors is not unique, we might wonder whether we may always find a unique factorization of the form: $$c = (((a_{1} \oplus a_{2}) \oplus a_{3}) \oplus ...) \oplus a_{n}$$
But even this is not true.\\
For instance, consider the $1$-quiddity cycle $c = (2,1,1,-1,0) \in \Z^{5}$. Then there is no irreducible quiddity cycle $b$ such that $c = a \oplus b$.\\
Indeed, if there was such a quiddity cycle $b = (b_{1},...,b_{l})$, the last entry of $c$ would be the entry $b_{l-1}$. The only irreducible quiddity cycle with $b_{l-1} = 0$ is $b = (0, \lambda, 0, -\lambda)$ for some $\lambda \in \Z \backslash \{-1,0,1\}$.\\
Hence $a$ must be of length three and we have
$$ (2,1,1,-1,0) = (a_{1} - \lambda, a_{2},a_{3}+0,\lambda,0)$$
which implies $a = (1,1,1)$ and $b = (0,-1,0,1)$, but $b$ is not irreducible by Example \ref{ExReducibility}(3).\\
We conclude that we cannot avoid the action of the dihedral group in a factorization procedure. Using Theorem \ref{CommutativityAssociativity}, one can prove that for a given $\lambda$-quiddity cycle $c \in R_{m}$, we may always find $\sigma_{1},...,\sigma_{n-1} \in D_{\infty}$ such that $$c = ((((a_{1} \oplus a_{2})^{\sigma_{1}} \oplus a_{3})^{\sigma_{2}} \oplus ... )^{\sigma_{n-2}} \oplus a_{n})^{\sigma_{n-1}}.$$
However, we may not choose $\sigma_{1} = ... = \sigma_{n-1} = \id$ in general (see the above example) nor is the above factorization unique in any sense, see the example in Section 3.2 with $c = ((b_{1} \oplus b_{2}) \oplus b_{3}) \oplus b_{4}$ and $c = ((a_{2} \oplus a_{3}) \oplus a_{1} )^{\sigma}$, using Theorem 
\ref{CommutativityAssociativity}.

\bibliographystyle{alpha}

\bibliography{FriezeBib}

\end{document}